\newtheorem{theorem}{Theorem}[section]
\newtheorem{lemma}[theorem]{Lemma}
\newtheorem{example}[theorem]{Example}
\DeclareMathOperator*{\col}{col}
\DeclareMathOperator{\He}{He}
\DeclareMathOperator{\eps}{\varepsilon}
\DeclareMathOperator{\co}{co}
\title{Convex dwell-time characterizations for uncertain linear impulsive systems}
\author{Corentin Briat$^1$\thanks{$^1$ The author was previously with the ACCESS Linnaeus Centre, KTH, Stockholm, Sweden. He is now with the Swiss Federal Institute of Technology -- Z\"{u}rich (ETH-Z), Department of Biosystems Science and Engineering (D-BSSE), Mattenstrasse 26, 4058 Basel, Switzerland; email: corentin@briat.info, corentin.briat@bsse.ethz.ch; url: http://www.briat.info} and Alexandre Seuret$^2$\thanks{$^2$ CNRS, Control Systems Department GIPSA-Lab - INP BP 46, Domaine
Universitaire - 38400 Saint Martin d'H\`{e}res - FRANCE; email: alexandre.seuret@gipsa-lab.genoble-inp.fr; url: http://www.gipsa-lab.inpg.fr/\textasciitilde alexandre.seuret/cv\_en.html}}
\begin{document}
\maketitle

\begin{abstract}
New sufficient conditions for the characterization of dwell-times for linear impulsive systems are proposed and are shown to coincide with continuous decrease conditions of a certain class of looped-functionals, a recently introduced type of functionals suitable for the analysis of hybrid systems. This approach allows to consider Lyapunov functions that evolve non-monotonically along the flow of the system in a new way, enlarging then the admissible class of systems which may be analyzed. As a byproduct, due to the particular structure of the obtained conditions, the method is easily extendable to uncertain systems by exploiting some convexity properties. Several examples illustrate the approach.
\end{abstract}

\begin{keywords}
Impulsive systems; dwell-time; looped-functionals; stability; robustness
\end{keywords}

\section{Introduction}

Impulsive systems \cite{Bainov:89, Yang:01b, Cai:05, Cai:08, Hespanha:08, Briat:11l} are an important class of hybrid systems in which the system trajectories admit discontinuities at certain time-instants. They arise in several fields like epidemiology \cite{Stone:00,Briat:09h}, sampled-data and networked control systems \cite{Naghshtabrizi:08}, etc. Among the wide class of impulsive dynamical systems, we may distinguish systems whose impulse-times depend on the system state and those for which they are external and only time-dependent. Linear systems of the latter class may be represented in the following form
\begin{equation}\label{eq:mainsyst}
  \begin{array}{lcl}
    \dot{x}(t)&=&Ax(t),\ t\ne t_k,\ k\in\{1,2,\ldots\}\\
    x^+(t)&=&Jx(t),\ t=t_k,\ k\in\{1,2,\ldots\}\\
    x(t_0)&=&x_0
  \end{array}
\end{equation}
where $x,x_0\in\mathbb{R}^n$ are the system state and the initial condition respectively. The state $x(t)$ is assumed to be left-continuous with $x^+(t):=\lim_{s\downarrow t}x(s)$ and the matrices $A$ and $J$ may be uncertain. The sequence $\{t_1,t_2,t_3,\ldots\}$ is a strictly increasing sequence of impulse times in $(t_0,\infty)$ for some initial time $t_0$. The sequence is assumed to be finite, or infinite and unbounded. The distance between two consecutive impulse times is denoted by $T_k:=t_{k+1}-t_k$ with the additional assumption that $\eps<T_k$, for some $\eps>0$, which excludes the possible existence of a finite accumulation point. Given a set $\mathcal{S}\subseteq(0,\infty)$, we define $\mathbb{I}_{\mathcal{S}}:=\left\{\{t_1,t_2,\ldots\}:\ t_{k+1}-t_k\in \mathcal{S},\ k\in\mathbb{N}\right\}$ to characterize the sequence of impulse times.

According to the matrices $A$ and $J$, the system may exhibit different behaviors. In the case of periodic impulses with period $T_k=T$, determining stability essentially reduces to study the Schurness of the matrix $Je^{AT}$, which is a very simple problem. However, this formulation suffers from several critical drawbacks:
\begin{enumerate}
   \item  the eigenvalue analysis is not extendable to the case of aperiodic impulses since the spectral radius is, in general, not submultiplicative;
   \item discrete-time Lyapunov approaches are thus needed, but they lead to robust LMIs with scalar uncertainties at the exponential, known to be complex numerically, but yet solvable \cite{Oishi:10};
   \item the extension to robust stability analysis is also difficult, again due to the exponential. There is no efficient solution, at that time, to handle matrix uncertainties at the exponential.
\end{enumerate}

The approach discussed in this paper aims at overcoming these problems. The method originates from \cite{Seuret:11b, Seuret:11} where an implicit but equivalent correspondence between discrete- and continuous-time domains is obtained by showing that discrete-time stability is equivalent to a very particular type of continuous-time stability proved using a new type of functionals, referred to as \emph{looped-functionals} \cite{Briat:11l,Briat:12d}. Since discrete-time stability focuses on the decrease of Lyapunov functions along the flow of a system at pointwise instants only, i.e. $V(x(t))$, $t\in\{t_1,t_2,t_3,\ldots\}\cup\{\infty\}$, its continuous-time extension $V(x(t))$, $t\in(0,\infty)$ may then be allowed to be nonmonotonic between consecutive pointwise instants. This hence authorizes the use of continuous-time Lyapunov functions that evolve non-monotonically along the flow of the system, enlarging then the class of admissible impulsive systems which can be possibly analyzed using existing approaches \cite{Hespanha:08,Cai:08}. Some results relying on a specific functional have been obtained for impulsive systems in \cite{Briat:11l,Briat:12c}. Although leading to interesting results in some cases, selecting a functional however introduces some conservatism.

\begin{figure}[h]
  \includegraphics[width=0.45\textwidth]{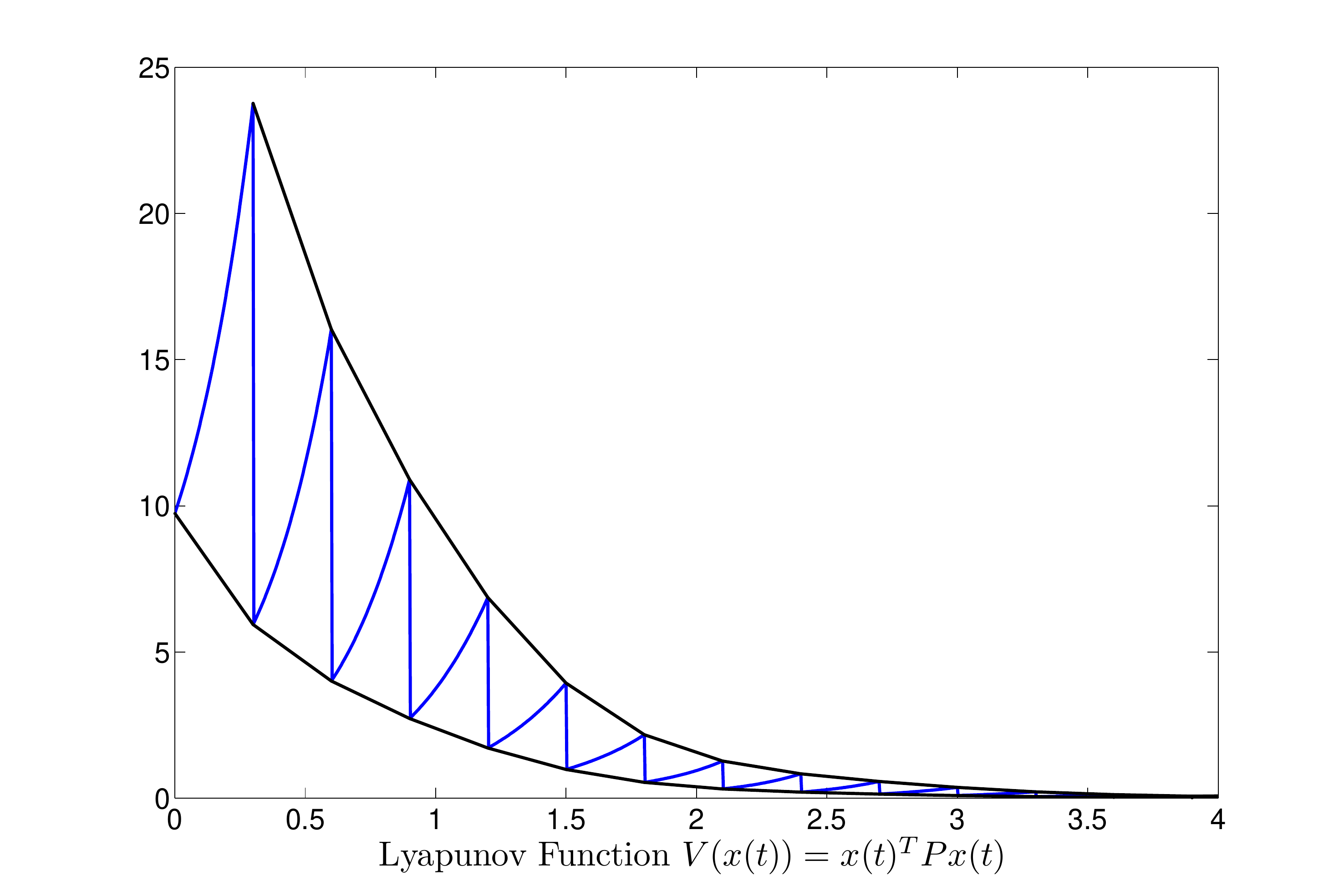}
  \caption{Continuous-time Lyapunov function $V$ (plain) for Example \ref{ex:max} and the discrete-time envelopes (dashed). The discrete-time envelopes are monotonically decreasing while the continuous-time function is highly non-monotonic along the flow of the system.}\label{fig:loc2}
\end{figure}

The current article develops a more generic framework based on an implicit characterization of a subset of the admissible looped-functionals. Since no particular functional is chosen in this paper, the obtained results are in essence more computationally-oriented than those in \cite{Briat:11l,Briat:12c} where functionals are specifically chosen. The obtained conditions are expressed as robust convex optimization problems, conveniently depending on the inter-impulses interval distance and on the system matrices. They are also devoid of exponential terms and can be then efficiently extended to the uncertain case, usually not considered in the literature due to a lack of tractability. Thanks to the discrete-time nature of the stability notion, the approach is able to consider the jumps precisely by authorizing the continuous-time Lyapunov function to be strongly non-monotonic along the flow of the system; see Fig. \ref{fig:loc2}. This feature allows to both characterize stability under periodic and aperiodic impulses. In the aperiodic case, stability under different dwell-times \cite{Liberzon:03, Geromel:06b,Hespanha:08} are considered: minimal dwell-time $T_k\in[T_{min},+\infty)$, maximal dwell-time $T_k\in(0,T_{max}]$ and ranged dwell-time $T_k\in[T_{min},T_{max}]$. The resulting feasibility problems are then solved using polynomial programming, such as sum-of-squares \cite{Parrilo:00} programming, very suitable for this kind of problems.

The paper is structured as follows: Section \ref{sec:DT} is devoted to preliminary results on dwell-times for impulsive systems. In Section \ref{sec:explessDT}, alternative sufficient conditions for dwell-times are provided and applied in Section \ref{sec:robstab} to uncertain systems. Finally, Section \ref{sec:sos} is devoted to the presentation of Sum-of-Squares (SOS) programming used in the examples to enforce the conditions stated in the main results. Illustrative examples are treated in the related sections using SOSTOOLS \cite{sostools} and SeDuMi \cite{Sturm:99}.

The notation is quite standard. The sets of symmetric and positive definite matrices of dimension $n$ are denoted by $\mathbb{S}^n$ and $\mathbb{S}_{+}^n$ respectively. Given two symmetric matrices $A,B$, $A\succ B$ means that $A-B$ is positive definite. For a square matrix $A$, the operator $\He(A)$ stands for the sum $A+A^T$. The identity matrix of dimension $n$ is denoted by $I_n$.

\section{Dwell-time results}\label{sec:DT}

\subsection{Ranged Dwell-Time}

The most general dwell-time notion is referred to as \emph{ranged dwell-time} and is characterized in this section. Minimal and maximal dwell-times are corollaries of this result.
\begin{lemma}[Ranged Dwell-Time]\label{lem:rangedDT}
  Assume there exists a matrix $P\in\mathbb{S}_{+}^n$ such that
  \begin{equation}\label{eq:lolilol23}
    J^Te^{A^T\theta}Pe^{A\theta}J-P\prec0
  \end{equation}
  holds for all $\theta\in[T_{min},T_{max}]$. Then, for any impulse sequence in $\mathbb{I}_{[T_{min},T_{max}]}$, the system (\ref{eq:mainsyst}) is globally asymptotically stable.
\end{lemma}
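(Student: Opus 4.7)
The plan is to use $V(x)=x^TPx$ as a discrete-time Lyapunov function evaluated at the impulse times $\{t_k\}$, and then recover continuous-time asymptotic stability by exploiting the boundedness of the flow between jumps.

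First, I would write the explicit solution of (\ref{eq:mainsyst}) on the interval $(t_k,t_{k+1}]$. Since $x^+(t_k)=Jx(t_k)$ and $\dot x=Ax$ between impulses, the standard variation of constants gives $x(t)=e^{A(t-t_k)}Jx(t_k)$ for $t\in(t_k,t_{k+1}]$. Taking the left-limit at $t_{k+1}$ yields $x(t_{k+1})=e^{AT_k}Jx(t_k)$ with $T_k=t_{k+1}-t_k\in[T_{min},T_{max}]$. A direct calculation then gives
\begin{equation*}
V(x(t_{k+1}))-V(x(t_k))=x(t_k)^T\bigl[J^Te^{A^TT_k}Pe^{AT_k}J-P\bigr]x(t_k),
\end{equation*}
so condition (\ref{eq:lolilol23}) immediately implies that the sequence $\{V(x(t_k))\}_{k\ge 0}$ is strictly decreasing.

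Next, to turn strict decrease into actual convergence, I would invoke a compactness argument. The map $\theta\mapsto M(\theta):=J^Te^{A^T\theta}Pe^{A\theta}J-P$ is continuous on the compact interval $[T_{min},T_{max}]$ and negative definite at every point, so there exists $\alpha>0$ with $M(\theta)\preceq -\alpha I_n$ uniformly in $\theta\in[T_{min},T_{max}]$. Hence $V(x(t_{k+1}))\le V(x(t_k))-\alpha\|x(t_k)\|^2\le\bigl(1-\alpha/\lambda_{max}(P)\bigr)V(x(t_k))$, and this contraction factor lies in $[0,1)$ since $V$ stays nonnegative. Iterating gives $V(x(t_k))\le\rho^kV(x(t_0))$ for some $\rho\in[0,1)$, so $x(t_k)\to 0$ exponentially along the impulse sequence.

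Finally, I would extend this discrete convergence to the whole time axis. For $t\in(t_k,t_{k+1}]$ the bound $\|x(t)\|\le C\|x(t_k)\|$ holds with $C:=\sup_{s\in[0,T_{max}]}\|e^{As}J\|$, which is finite because the supremum is taken over a compact set. Combining this with the geometric decay of $\|x(t_k)\|$ yields $x(t)\to 0$ as $t\to\infty$, uniformly over $\mathbb{I}_{[T_{min},T_{max}]}$. The same inequality, together with $\lambda_{min}(P)\|x\|^2\le V(x)\le\lambda_{max}(P)\|x\|^2$ and the monotonicity of $V$ along impulses, gives a bound of the form $\|x(t)\|\le C\sqrt{\lambda_{max}(P)/\lambda_{min}(P)}\|x_0\|$, which provides Lyapunov stability. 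Global asymptotic stability then follows. I do not anticipate a serious obstacle; the only subtle point is making explicit the uniform contraction rate $\alpha$ via compactness of $[T_{min},T_{max}]$, since the hypothesis only asserts pointwise negative definiteness.
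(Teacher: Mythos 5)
Your proof follows essentially the same route as the paper: treat $V(x)=x^TPx$ as a discrete-time Lyapunov function for the jump-to-jump map $x(t_{k+1})=e^{AT_k}Jx(t_k)$ and then propagate the decay to all $t$ using boundedness of $e^{A\tau}J$ over the compact dwell-time range. Your version is in fact slightly more careful than the paper's, since you make explicit the uniform bound $M(\theta)\preceq-\alpha I_n$ by compactness (which the paper implicitly assumes when asserting $\|x(t_k)\|\to 0$) and you also spell out the Lyapunov-stability bound; the argument is correct.
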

\begin{proof}
    The condition (\ref{eq:lolilol23}) implies that $V(x)=x^TPx$ is a discrete-time Lyapunov function for the discrete-time system $z(k+1)=e^{AT_k}Jz(k)$, $T_k\in[T_{min},T_{max}]$, since (\ref{eq:lolilol23}) is equivalent to the condition $V(x(t_{k+1}))-V(x(t_k))<0$ for all $t_{k+1}-t_k\in[T_{min},T_{max}]$ and all $x(t_k)\in\mathbb{R}^n$. This implies in turn that $||x(t_k)||_2\to0$ as $k\to\infty$. It remains to prove that the convergence of $||x(t_k)||_2$ to 0 implies the convergence of $||x(t)||_2$ to 0 when $t\to\infty$. Defining $t=t_k+\tau$, we have $x(t)=e^{A\tau}Jx(t_k)$ and hence
\begin{equation*}
  V(x(t))\le\mu ||x(t_k)||_2^2
\end{equation*}
where $\mu=\sup_{s\in[T_{min},T_{max}]}\lambda_{max}(J^Te^{A^T\tau}Pe^{A\tau}J)$. Thus, when $||x(t_k)||_2\to0$ then $||x(t)||_2\to0$. The proof is complete.
\end{proof}
The main difficulty of the above result lies in solving the robust semidefinite feasibility problem (\ref{eq:lolilol23}) over the whole range of $\theta$. The presence of exponential terms also prevents to easily derive a tractable generalization of this result to the uncertain case.

\subsection{Minimal Dwell-Time}

The minimal dwell-time result corresponds to the case when $T_{min}=\bar{T}$, for some $\bar{T}>0$, and $T_{max}\to\infty$. However, since the interval $[\bar{T},+\infty)$ is of infinite measure, it seems rather difficult to computationally check the condition (\ref{eq:lolilol23}) over such an interval. Fortunately, it is possible to provide a finite dimensional alternative result.
\begin{lemma}[Minimal Dwell-Time]\label{lem:minDT}
  Assume that for some given $\bar{T}>0$, there exists a matrix $P\in\mathbb{S}^n_{+}$ such that
  \begin{equation}\label{eq:lolilol1}
    A^TP+PA\prec0
  \end{equation}
  and
  \begin{equation}\label{eq:lolilol2}
    J^Te^{A^T\bar{T}}Pe^{A\bar{T}}J-P\prec0
  \end{equation}
  hold. Then, for any impulse sequence in $\mathbb{I}_{[\bar{T},+\infty)}$, the system (\ref{eq:mainsyst}) is globally asymptotically stable.
\end{lemma}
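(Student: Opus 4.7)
The plan is to mimic the strategy of Lemma~\ref{lem:rangedDT} but to exploit condition~(\ref{eq:lolilol1}) to extend the discrete-time decrease from the single value $\theta=\bar T$ to the whole ray $[\bar T,+\infty)$, thereby sidestepping the infinite-measure difficulty mentioned right before the statement. Take $V(x)=x^TPx$. Between impulses one has $\dot V(x(t))=x(t)^T(A^TP+PA)x(t)<0$ by~(\ref{eq:lolilol1}), so $V$ is strictly decreasing along the continuous flow. In particular, the map $s\mapsto e^{A^Ts}Pe^{As}$ is monotonically nonincreasing in the Loewner order, since its derivative $e^{A^Ts}(A^TP+PA)e^{As}$ is negative definite for all $s\geq0$; hence $e^{A^Ts}Pe^{As}\preceq P$ for every $s\geq0$.

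Next, for any $T_k\geq\bar T$ write $T_k=\bar T+s$ with $s\geq0$ and factorize
\begin{equation*}
J^Te^{A^TT_k}Pe^{AT_k}J
= J^Te^{A^T\bar T}\bigl(e^{A^Ts}Pe^{As}\bigr)e^{A\bar T}J
\preceq J^Te^{A^T\bar T}Pe^{A\bar T}J \prec P,
\end{equation*}
where the first inequality uses the monotonicity just established (conjugating by $e^{A\bar T}J$ preserves the order) and the strict inequality is~(\ref{eq:lolilol2}). This is exactly the ranged-dwell-time condition~(\ref{eq:lolilol23}) applied at the \emph{particular} value $\theta=T_k$, valid for any $T_k\in[\bar T,+\infty)$. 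Consequently, $V(x(t_{k+1}))-V(x(t_k))<0$ for every admissible inter-impulse interval, which proves that $\|x(t_k)\|_2\to0$ as $k\to\infty$ exactly as in the proof of Lemma~\ref{lem:rangedDT}.

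It remains to upgrade convergence at the impulse instants to convergence of the whole trajectory. For $t\in(t_k,t_{k+1}]$, writing $t=t_k+\tau$ with $\tau\in(0,T_k]$, we have $x(t)=e^{A\tau}Jx(t_k)$, and condition~(\ref{eq:lolilol1}) together with the monotonicity argument above yields $V(x(t))\leq V(Jx(t_k))\leq\lambda_{\max}(J^TPJ)\,\|x(t_k)\|_2^2$. Since $P\succ0$, this bound is uniform in $\tau$, and the convergence of $\|x(t_k)\|_2$ to $0$ forces $\|x(t)\|_2\to0$ as $t\to\infty$, yielding global asymptotic stability.

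The only delicate step is the factorization and ordering argument in the second paragraph; the rest is essentially bookkeeping inherited from Lemma~\ref{lem:rangedDT}. The conceptual point to stress is that~(\ref{eq:lolilol1}) provides the missing monotonicity in $\theta$ that lets us check the robust LMI~(\ref{eq:lolilol23}) at the single endpoint $\theta=\bar T$ instead of on an unbounded interval.
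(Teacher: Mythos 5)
Your proof is correct and follows essentially the same route as the paper: condition~(\ref{eq:lolilol1}) makes the map $\theta\mapsto e^{A^T\theta}Pe^{A\theta}$ nonincreasing in the Loewner order, which extends the single-point condition~(\ref{eq:lolilol2}) at $\theta=\bar T$ to all $\theta\in[\bar T,+\infty)$, after which the conclusion is inherited from Lemma~\ref{lem:rangedDT}. The only differences are cosmetic: you obtain the monotonicity by the semigroup factorization $e^{A(\bar T+s)}=e^{As}e^{A\bar T}$ rather than the paper's Taylor-expansion argument, and your uniform inter-impulse bound $V(x(t))\le\lambda_{\max}(J^TPJ)\|x(t_k)\|_2^2$ explicitly covers the unbounded range of $\tau$, a detail the paper leaves implicit when it refers back to the ranged dwell-time proof.
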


\begin{proof}
The goal is to show that the conditions of Lemma \ref{lem:minDT} implies that
$J^Te^{A^T\theta}Pe^{A\theta}J-P\prec0$ for all $\theta\in[\bar{T},+\infty)$. A Taylor expansion of $\Phi(\theta):=e^{A^T\theta}Pe^{A\theta}$ around $\theta_0\ge0$ yields
\begin{equation*}
  \Phi(\theta_0+\delta)=\Phi(\theta_0)+e^{A^T\theta_0}(A^TP+PA)e^{A\theta_0}\delta+o(\delta).
\end{equation*}
Hence, condition (\ref{eq:lolilol1}) implies that the map $\Phi(\theta)$ is strictly decreasing, in the sense that $\Phi(\theta_0+\epsilon)\prec\Phi(\theta_0)$, for any $\theta_0>0$ and $\epsilon>0$. Combining this with (\ref{eq:lolilol2}) yields
 \begin{equation*}
    J^Te^{A^T(\bar{T}+\epsilon)}Pe^{A(\bar{T}+\epsilon)}J-P\prec0
 \end{equation*}
 for all $\epsilon\ge0$. The proof of convergence of the state $x(t)$ is finally proved as in the proof of Lemma \ref{lem:rangedDT}.
\end{proof}

\subsection{Maximal Dwell-Time}

The maximal dwell-time\footnote{This is also referred to as \emph{reverse dwell-time} in \cite{Hespanha:08}} result can be obtained by letting $T_{max}=\bar{T}$, for some $\bar{T}>0$, and $T_{min}\to0$ in Lemma \ref{lem:rangedDT}. In this case, the interval $(0,\bar{T}]$ is of finite measure and the problem of infinite measure does not hold. It is hence enough to consider the condition (\ref{eq:lolilol23}) over this interval.

It is however possible, under the restriction that $A$ is anti-Hurwitz, to reduce the condition to a finite-dimensional one, similarly as for the minimal dwell-time.
\begin{lemma}[Maximal Dwell-Time]\label{lem:maxDT}
  Assume that for some given $\bar{T}>0$, there exists a matrix $P\in\mathbb{S}_{+}^n$ such that
 \begin{equation}\label{eq:lolilol11}
    A^TP+PA\succ0
  \end{equation}
  and
  \begin{equation}\label{eq:lolilol22}
    J^Te^{A^T\bar{T}}Pe^{A\bar{T}}J-P\prec0
  \end{equation}
  hold. Then, for any impulse sequence in $\mathbb{I}_{(0,\bar{T}]}$, the system (\ref{eq:mainsyst}) is globally asymptotically stable.
\end{lemma}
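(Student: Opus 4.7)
The plan is to mirror the proof of Lemma \ref{lem:minDT} almost verbatim, with two sign-flips dictated by the maximal (rather than minimal) dwell-time setting: the monotonicity of the map $\Phi(\theta):=e^{A^T\theta}Pe^{A\theta}$ is reversed, and the admissible range is now the bounded interval $(0,\bar T]$ instead of the unbounded ray $[\bar T,+\infty)$. The target, as before, is to establish (\ref{eq:lolilol23}) uniformly on $(0,\bar T]$ so that Lemma \ref{lem:rangedDT} (or its proof) delivers stability.

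First I would invoke the same first-order Taylor expansion used in the proof of Lemma \ref{lem:minDT}:
\begin{equation*}
\Phi(\theta_0+\delta)=\Phi(\theta_0)+e^{A^T\theta_0}(A^TP+PA)e^{A\theta_0}\,\delta+o(\delta).
\end{equation*}
Because now (\ref{eq:lolilol11}) gives $A^TP+PA\succ0$ and $e^{A\theta_0}$ is nonsingular, the matrix $e^{A^T\theta_0}(A^TP+PA)e^{A\theta_0}$ is positive definite, so $\Phi$ is strictly \emph{increasing}: $\Phi(\theta_0+\epsilon)\succ\Phi(\theta_0)$ for every $\theta_0\ge0$ and $\epsilon>0$. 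Conjugating by $J$ preserves the inequality non-strictly, and combining with (\ref{eq:lolilol22}) yields, for every $\theta\in(0,\bar T]$,
\begin{equation*}
J^Te^{A^T\theta}Pe^{A\theta}J\ \preceq\ J^Te^{A^T\bar T}Pe^{A\bar T}J\ \prec\ P,
\end{equation*}
so that $J^Te^{A^T\theta}Pe^{A\theta}J-P\prec0$ uniformly on $(0,\bar T]$. This is exactly condition (\ref{eq:lolilol23}) with $[T_{min},T_{max}]$ replaced by $(0,\bar T]$, so $V(x)=x^TPx$ is a strict discrete-time Lyapunov function along the sampled sequence, and $\|x(t_k)\|_2\to0$.

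To promote sampled convergence to full asymptotic stability I would reuse verbatim the interpolation argument at the end of the proof of Lemma \ref{lem:rangedDT}: on each interval $[t_k,t_{k+1}]$, write $t=t_k+\tau$ and bound $V(x(t))\le\mu\|x(t_k)\|_2^2$ with $\mu=\sup_{\tau\in(0,\bar T]}\lambda_{\max}(J^Te^{A^T\tau}Pe^{A\tau}J)$, which is finite since the integrand is continuous and the closure $[0,\bar T]$ is compact (and indeed the supremum is attained at $\tau=\bar T$ by the monotonicity just established). I do not expect any genuine obstacle here: the only subtlety is recognizing that the required sign of $A^TP+PA$ flips with respect to Lemma \ref{lem:minDT}, which is precisely why the hypothesis ``$A$ anti-Hurwitz'' appears. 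Everything else is symmetric to the minimal dwell-time case.
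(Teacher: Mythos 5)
Your proposal is correct and is exactly the argument the paper intends: its proof of Lemma \ref{lem:maxDT} is stated only as ``similar to the one of Lemma \ref{lem:minDT}'', and you have carried out precisely that adaptation — the reversed monotonicity of $\Phi(\theta)=e^{A^T\theta}Pe^{A\theta}$ under $A^TP+PA\succ0$, conjugation by $J$ (correctly noted as non-strict) combined with the strict inequality at $\bar T$, and the same interpolation bound from Lemma \ref{lem:rangedDT} to pass from sampled to full-state convergence. Nothing further is needed.
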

\begin{proof}
  The proof is similar to the one of Lemma \ref{lem:minDT}.
\end{proof}

\section{Alternative dwell-times characterization}\label{sec:explessDT}

The main and common drawback of the results of Section \ref{sec:DT} is the presence of exponential terms, preventing any extension to robust dwell-time characterization. Efficiently dealing with matrix uncertainties at the exponential is indeed an open and difficult problem. Checking the condition (\ref{eq:lolilol23}) over a finite-length interval is also computationally demanding, again due to the exponential terms.

We provide here a way for approximating these LMI constraints involving exponential terms by exponential-less ones. These conditions are expressed as robust LMIs containing an additional infinite-dimensional decision variable satisfying a certain boundary condition. It is also emphasized that the results can be interpreted as the continuous-time decrease condition of a very general looped-functional.

\subsection{Ranged Dwell-Time:}

As in the previous section, we start with the most general ranged dwell-time result:
\begin{theorem}\label{th:fund}
 The system (\ref{eq:mainsyst}) is globally asymptotically stable for any impulse sequence in $\mathbb{I}_{[T_{min},T_{max}]}$ if there exist a matrix $P\in\mathbb{S}_+^n$, a scalar $\eps>0$ and a matrix function $Z:[0,T_{max}]\times[T_{min},T_{max}]\to\mathbb{S}^{3n}$, differentiable with respect to the first variable, verifying
        \begin{equation}\label{eq:loliloolldr}
        \begin{array}{rclclcl}
           Y_1^TZ(0,\theta)Y_1-Y_2^TZ(\theta,\theta)Y_2=0
        \end{array}
        \end{equation}
        for all $\theta\in[T_{min},T_{max}]$ where
        \begin{equation*}
          Y_1=\begin{bmatrix}
            J & 0\\
            I_n & 0\\
            0 & I_n
          \end{bmatrix},\ Y_2=\begin{bmatrix}
            0 & I_n\\
            I_n & 0\\
            0 & I_n
          \end{bmatrix}
        \end{equation*}
        and such that the parameter dependent LMI
        \begin{equation}\label{eq:expless}
\Psi(\theta)+\He\left\{\begin{bmatrix}
   A & 0 & 0\\
   0 & 0 & 0\\
   0 & 0 & 0
 \end{bmatrix}^TZ(\tau,\theta)\right\}+\dfrac{\partial}{\partial\tau}Z(\tau,\theta)\preceq0
\end{equation}
holds for all $\tau\in[0,\theta]$ and $\theta\in[T_{min},T_{max}]$ with
\begin{equation}\label{eq:Psi}
   \Psi(\theta):=\begin{bmatrix}
   \theta(A^TP+PA) & 0 & 0\\
   0 & J^TPJ-P+\eps I_n  & 0\\
   0 & 0 & 0
 \end{bmatrix}.
\end{equation}

Moreover, in such a case, the quadratic form $V(x)=x^TPx$ is a discrete-time Lyapunov function for the discrete-time formulation of system (\ref{eq:mainsyst}) and Lemma \ref{lem:rangedDT} holds with the same matrix $P$.
%
%
\end{theorem}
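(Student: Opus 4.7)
The plan is to reduce the conditions of Theorem~\ref{th:fund} to the ranged dwell-time inequality (\ref{eq:lolilol23}) of Lemma~\ref{lem:rangedDT}, which will simultaneously yield global asymptotic stability and establish $V(x)=x^TPx$ as a discrete-time Lyapunov function with the same $P$. The key is to reinterpret the $3n\times 3n$ variable $Z(\tau,\theta)$ as acting on the trajectory-dependent augmented vector $\xi(\tau):=\col(y(\tau),x(t_k),x(t_{k+1}))$, where $y(\tau):=e^{A\tau}Jx(t_k)$ is the continuous portion of the solution on $[t_k,t_{k+1}]$ and $\theta:=t_{k+1}-t_k\in[T_{min},T_{max}]$. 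A direct substitution then shows that $\xi(0)=Y_1\col(x(t_k),x(t_{k+1}))$ and $\xi(\theta)=Y_2\col(x(t_k),x(t_{k+1}))$, so the boundary condition (\ref{eq:loliloolldr}) is exactly the statement that $\xi(\tau)^T Z(\tau,\theta)\xi(\tau)$ takes the same value at $\tau=0$ and at $\tau=\theta$ along any admissible trajectory.

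Next, I would pre- and post-multiply (\ref{eq:expless}) by $\xi(\tau)$. Letting $\mathcal{A}$ denote the $3n\times 3n$ block matrix appearing inside the $\He\{\cdot\}$ of (\ref{eq:expless}), one checks $\dot{\xi}(\tau)=\mathcal{A}\xi(\tau)$, so that
\begin{equation*}
\tfrac{d}{d\tau}\bigl[\xi(\tau)^T Z(\tau,\theta)\xi(\tau)\bigr]=\xi(\tau)^T\bigl(\He\{\mathcal{A}^T Z(\tau,\theta)\}+\tfrac{\partial}{\partial\tau}Z(\tau,\theta)\bigr)\xi(\tau).
\end{equation*}
Hence (\ref{eq:expless}) implies the scalar inequality $\xi(\tau)^T\Psi(\theta)\xi(\tau)+\frac{d}{d\tau}[\xi(\tau)^T Z(\tau,\theta)\xi(\tau)]\le 0$ on $[0,\theta]$. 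Integrating over $\tau\in[0,\theta]$, the $Z$-contribution telescopes to $\xi(\theta)^T Z(\theta,\theta)\xi(\theta)-\xi(0)^T Z(0,\theta)\xi(0)$, which vanishes by the boundary condition; what remains is $\int_0^\theta \xi(\tau)^T\Psi(\theta)\xi(\tau)\,d\tau\le 0$.

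The block-diagonal form of $\Psi(\theta)$ in (\ref{eq:Psi}) now does the accounting. The $(2,2)$-block is independent of $\tau$ and contributes $\theta\,x(t_k)^T(J^TPJ-P+\eps I_n)x(t_k)$. The $(1,1)$-block yields $\theta\int_0^\theta y(\tau)^T(A^TP+PA)y(\tau)\,d\tau$, which by $\frac{d}{d\tau}(y^TPy)=y^T(A^TP+PA)y$ equals $\theta\,[x(t_{k+1})^TPx(t_{k+1})-x(t_k)^TJ^TPJx(t_k)]$ since $y(0)=Jx(t_k)$ and $y(\theta)=x(t_{k+1})$. Summing the two and dividing by $\theta>0$, the $J^TPJ$ terms cancel and we obtain $x(t_{k+1})^TPx(t_{k+1})-x(t_k)^TPx(t_k)\le-\eps\|x(t_k)\|_2^2$, valid for every $\theta\in[T_{min},T_{max}]$ and every $x(t_k)\in\mathbb{R}^n$. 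Since $x(t_{k+1})=e^{A\theta}Jx(t_k)$, this is the pointwise quadratic form of $J^Te^{A^T\theta}Pe^{A\theta}J-P\preceq-\eps I_n\prec 0$, i.e.\ precisely (\ref{eq:lolilol23}). Lemma~\ref{lem:rangedDT} then delivers global asymptotic stability with the same $P$, and the discrete-time Lyapunov claim is read off the chain $V(x(t_{k+1}))-V(x(t_k))\le -\eps\|x(t_k)\|_2^2$.

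The only genuinely non-routine step is the first one: discovering the augmented vector $\xi(\tau)$ so that the two selector matrices $Y_1$ and $Y_2$ pick out the post-jump state $Jx(t_k)$ and the pre-jump state $x(t_{k+1})$ respectively. Once this interpretation is fixed, the boundary condition (\ref{eq:loliloolldr}) has been designed precisely to annihilate the $Z$-dependent terms upon integration, and the remainder of the argument is an application of the fundamental theorem of calculus; no further subtlety is anticipated.
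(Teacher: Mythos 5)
Your proof is correct and follows essentially the same route as the paper: pre- and post-multiplying (\ref{eq:expless}) by the augmented trajectory vector $\col(e^{A\tau}Jx(t_k),x(t_k),x(t_{k+1}))$, integrating over $[0,\theta]$, using the looping condition (\ref{eq:loliloolldr}) to annihilate the $Z$-dependent boundary terms, and recovering the discrete decrease condition $J^Te^{A^T\theta}Pe^{A\theta}J-P\prec 0$ so that Lemma \ref{lem:rangedDT} applies with the same $P$. The only cosmetic difference is that you evaluate the $(1,1)$-block integral explicitly via the fundamental theorem of calculus and cancel the $J^TPJ$ terms, whereas the paper groups the same quantities as telescoping differences of $V$; the content is identical.
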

\begin{proof}
  To see the implication of Lemma \ref{lem:rangedDT}, first pre- and post-multiply (\ref{eq:expless}) by $\xi_k(\tau):=\col(x(t_k+\tau),x(t_k),x(t_{k+1}))$, where $x(t_k+\tau)=e^{A\tau}Jx(t_k)$, $\tau\in[0,T_k]$, $t_{k+1}=t_k+T_k$, to obtain
\begin{equation}\label{eq:xit}
\xi_k(\tau)^T\Psi(T_k)\xi_k(\tau)+\dfrac{\partial}{\partial\tau}\left[\xi_k(\tau)^TZ(\tau,T_k)\xi_k(\tau)\right]\le0
\end{equation}
where we have set $\theta=T_k$ since we are considering the interval $[t_k,t_{k+1}]$. The second term of (\ref{eq:xit}) has been obtained using the fact that
\begin{equation}
  \dfrac{d}{d\tau}\xi_k(\tau)=\begin{bmatrix}
    A & 0 & 0\\
    0 & 0 & 0\\
    0 & 0 & 0
  \end{bmatrix}\xi_k(\tau).
\end{equation}
Integrating (\ref{eq:xit}) from $0$ to $T_k$ we get
\begin{equation}\label{eq:dsjqdlmsqdmdr}
\begin{array}{lcl}
  \eta_k&:=&\int_0^{T_k}\left[x(t_k)^T(J^TPJ-P+\eps I_n)x(t_k)\right.\\
  &&+\left.T_k\dfrac{d}{d\tau}V(x(t_k+\tau))\right]d\tau\\
  &&+\xi_k(T_k)^TZ(T_k,T_k)\xi_k(T_k)-\xi_k^+(0)^TZ(0,T_k)\xi_k^+(0)
\end{array}
\end{equation}
where $V(x)=x^TPx$. Noting that
\begin{equation}\label{eq:constraint}
\xi_k^+(0)=Y_1\begin{bmatrix}
  x(t_k)\\
  x(t_{k+1})
\end{bmatrix},\quad \xi_k(T_k)=Y_2\begin{bmatrix}
  x(t_k)\\
  x(t_{k+1})
\end{bmatrix},
\end{equation}
we can see that the constraints (\ref{eq:loliloolldr}) make the two last terms of (\ref{eq:dsjqdlmsqdmdr}) equal to 0, and hence we have
\begin{equation}
  \begin{array}{lcl}
  \eta_k&=&T_k\left[V(x^+(t_k))-V(x(t_k))+\eps||x(t_k)||_2^2\right.\\
  &&\left.+V(x(t_{k+1}))-V(x^+(t_k))\right]\\
  &=&T_k\left[V(x(t_{k+1}))-V(x(t_k))+\eps||x(t_k)||_2^2\right].
\end{array}
\end{equation}
Since (\ref{eq:xit}) is nonpositive, then $\eta_k\le0$ for all $k$ and hence the feasibility of the conditions of Theorem \ref{th:fund} implies the feasibility of the conditions of Lemma \ref{lem:rangedDT}. This concludes the proof.
\end{proof}

The above theorem provides then a sufficient condition for the asymptotic stability of linear impulsive systems for any arbitrary impulse sequence in $\mathbb{I}_{[T_{min},T_{max}]}$. The condition (\ref{eq:expless}) can be understood as the continuous decrease over $\tau\in[0,\theta]$ of the following family of looped-functionals parameterized by $\theta\in[T_{min},T_{max}]$:
\begin{equation}
  W:=x(t_k+\tau)^TPx(t_k+\tau)+\xi_k(\tau)^TZ(\tau,\theta)\xi_k(\tau)+\tau x(t_k)^T(J^TPJ-P+\eps I)x(t_k)
\end{equation}
defined for $\tau\in[0,\theta]$ and satisfying the looping condition 
\begin{equation}\label{eq:loop}
  \xi_k^+(0)^TZ(0,\theta)\xi_k^+(0)=\xi_k(\theta)^TZ(\theta,\theta)\xi_k(\theta).
\end{equation}
The obtained conditions however take the form of an infinite dimensional semi-infinite feasibility problem, which may be hard to solve. When the matrix function $Z(\tau,T)$ is chosen in the set of matrix polynomials, sum-of-squares techniques can be applied in order to solve this problem. This framework allows to easily consider the equality constraint (looping-condition) as an equality constraint on the polynomial coefficients. One of the advantages of the proposed alternative formulation (\ref{eq:expless}) is to provide conditions which are structurally well suited (convex) for the generalization to the uncertain case. Several dwell-time results for uncertain systems are derived in Section \ref{sec:robstab}.


\begin{example}\label{ex:ranged}
    Let us consider the system (\ref{eq:mainsyst}) with matrices \cite{Briat:11l}
  \begin{equation}
  \begin{array}{lclclcl}
        A&=&\begin{bmatrix}
       -1 & 0.1\\
       0 & 1.2
    \end{bmatrix},&& J&=&\begin{bmatrix}
      1.2 &0\\
      0 &0.5
    \end{bmatrix}.
  \end{array}
  \end{equation}
  Since both $A$ and $J$ have unstable eigenvalues, the system is unstable for sufficiently small and large periods, and cannot be studied using many of the existing approaches \cite{Hespanha:08, Cai:08}. In order to determine the lower and upper bounds for the inter-impulse intervals, Theorem \ref{th:fund} is applied and leads to estimates of Table \ref{tab:cranged}. It is important to stress that, in the periodic case, the minimal and maximal periods are given by $0.1824$ and $0.5776$, showing that the same bounds are achieved in the aperiodic case and that the proposed result is nonconservative for a multivariate polynomial $Z(\tau,T)$ of degree 3.
  \begin{table}[h]
      \centering
    \begin{tabular}{|c|cc|}
    \hline
      order of $Z$& $T_{min}$ & $T_{max}$\\
      \hline
      \hline
      1 & 0.2040 & 0.5672\\   
      2 & 0.1824 & 0.5774\\
      3 & 0.1824 & 0.5776\\
      4 & 0.1824 & 0.5776\\
      \hline
    \end{tabular}
    \caption{Estimates of the admissible inter-impulse intervals for the aperiodic system of Example \ref{ex:ranged}}\label{tab:cranged}
  \end{table}
\end{example}

\subsection{Minimal Dwell-time}

The exponential-less approximation of the conditions of Lemma \ref{lem:minDT} is given below:
\begin{theorem}[Minimal Dwell-Time]\label{th:explessmin}
Assume that for some $\bar{T}>0$, there exist a scalar $\eps>0$, a matrix $P\in\mathbb{S}_{+}^n$ and a continuously differentiable matrix function $Z:[0,\bar{T}]\to\mathbb{S}^{3n}$ satisfying $Y_1^TZ(0)Y_1-Y_2^TZ(\bar{T})Y_2=0$ and such that the LMIs
\begin{equation}\label{eq:minDT1}
  A^TP+PA\prec0
\end{equation}
\begin{equation}\label{eq:minDT2}
\begin{bmatrix}
   \bar{T}(A^TP+PA) & 0 & 0\\
   0 & J^TPJ-P+\eps I_n & 0\\
   0 & 0 & 0
 \end{bmatrix}+\He\left\{\begin{bmatrix}
   A & 0 & 0\\
   0 & 0 & 0\\
   0 & 0 & 0
 \end{bmatrix}^TZ(\tau)\right\}+\dfrac{d}{d\tau}Z(\tau)\preceq0
\end{equation}
hold for all $\tau\in[0,\bar{T}]$. Then, for any impulsive sequence in $\mathbb{I}_{[\bar{T},+\infty)}$, the system (\ref{eq:mainsyst}) is globally asymptotically stable and Lemma \ref{lem:minDT} holds with the same matrix $P$.
\end{theorem}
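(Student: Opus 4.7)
The plan is to reduce the theorem to Lemma \ref{lem:minDT} by showing that the hypotheses stated here imply the two LMI conditions (\ref{eq:lolilol1})--(\ref{eq:lolilol2}) of that lemma, with the same matrix $P$. Observe that condition (\ref{eq:minDT1}) is literally identical to (\ref{eq:lolilol1}), so nothing needs to be done there. All the work is therefore in showing that the parametrized LMI (\ref{eq:minDT2}), together with the looping boundary equality $Y_1^T Z(0) Y_1 - Y_2^T Z(\bar{T}) Y_2 = 0$, implies the discrete decrease condition $J^T e^{A^T\bar{T}} P e^{A\bar{T}} J - P \prec 0$ at the single value $\theta = \bar{T}$.

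The argument I would run is essentially a specialization of the proof of Theorem \ref{th:fund} to a fixed dwell-time $\bar{T}$. First, I would pre- and post-multiply (\ref{eq:minDT2}) by the lifted vector $\xi_k(\tau) = \col(x(t_k+\tau), x(t_k), x(t_{k+1}))$, where the continuous-time trajectory on the interval $[t_k, t_{k+1}]$ of length $T_k = \bar{T}$ satisfies $x(t_k+\tau) = e^{A\tau} J x(t_k)$. Using the identity $\frac{d}{d\tau}\xi_k(\tau) = \diag(A, 0, 0)\,\xi_k(\tau)$, the $\He\{\cdot\}$ term together with $\frac{d}{d\tau}Z(\tau)$ recombines as the total derivative $\frac{d}{d\tau}[\xi_k(\tau)^T Z(\tau)\xi_k(\tau)]$.

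Next, I would integrate the resulting scalar inequality over $\tau \in [0, \bar{T}]$. The first block contributes $\bar{T}\int_0^{\bar{T}} \frac{d}{d\tau} V(x(t_k+\tau))\, d\tau = \bar{T}\,[V(x(t_{k+1})) - V(x^+(t_k))]$ where $V(x) = x^T P x$; the second block contributes $\bar{T}\,[V(x^+(t_k)) - V(x(t_k)) + \eps \|x(t_k)\|_2^2]$; and the $Z$-term telescopes to $\xi_k(\bar{T})^T Z(\bar{T})\xi_k(\bar{T}) - \xi_k^+(0)^T Z(0)\xi_k^+(0)$. Using the relations $\xi_k^+(0) = Y_1\col(x(t_k), x(t_{k+1}))$ and $\xi_k(\bar{T}) = Y_2\col(x(t_k), x(t_{k+1}))$ from (\ref{eq:constraint}), the looping boundary condition $Y_1^T Z(0) Y_1 = Y_2^T Z(\bar{T}) Y_2$ cancels the two boundary terms. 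What remains is $\bar{T}\,[V(x(t_{k+1})) - V(x(t_k)) + \eps\|x(t_k)\|_2^2] \le 0$, which is exactly the discrete-time Lyapunov decrease $J^T e^{A^T\bar{T}} P e^{A\bar{T}} J - P \preceq -\eps\bar{T}\,I_n \prec 0$, i.e.\ condition (\ref{eq:lolilol2}) of Lemma \ref{lem:minDT}.

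With (\ref{eq:lolilol1})--(\ref{eq:lolilol2}) both verified, Lemma \ref{lem:minDT} directly yields global asymptotic stability for every impulse sequence in $\mathbb{I}_{[\bar{T},+\infty)}$, and the $P$ matrix is the same throughout, which is what the theorem asserts. I do not anticipate a serious obstacle: the telescoping of the $Z$-term combined with the $Y_1/Y_2$ boundary equality is the only delicate point, and it is handled exactly as in Theorem \ref{th:fund}; the novelty here is merely that the parametric variable $\theta$ is frozen at $\bar{T}$, so $Z$ depends on the single variable $\tau$ and the semi-infinite LMI collapses to a one-parameter family.
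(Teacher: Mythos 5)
Your proposal is correct and follows exactly the route the paper intends: its one-line proof ``follows from Lemma \ref{lem:minDT} and Theorem \ref{th:fund}'' is precisely your specialization of the Theorem \ref{th:fund} argument to $\theta=\bar{T}$ to obtain (\ref{eq:lolilol2}), combined with (\ref{eq:minDT1})$\,=\,$(\ref{eq:lolilol1}) and Lemma \ref{lem:minDT}. (Only a cosmetic slip: after dividing the integrated inequality by $\bar{T}$ the bound is $J^Te^{A^T\bar{T}}Pe^{A\bar{T}}J-P\preceq-\eps I_n$, not $-\eps\bar{T}I_n$, which changes nothing in the conclusion.)
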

\begin{proof}
  The proof follows from Lemma \ref{lem:minDT} and Theorem \ref{th:fund}.
\end{proof}

\begin{example}\label{ex:min}
  Let us consider the system (\ref{eq:mainsyst}) with matrices \cite{Briat:11l}
  \begin{equation}
  \begin{array}{lclclcl}
        A&=&\begin{bmatrix}
   -1 &0\\
   1 &-2
    \end{bmatrix},& &J&=&\begin{bmatrix}
      2 & 1\\
      1 & 3
    \end{bmatrix}.
  \end{array}
  \end{equation}
  Since $A$ is Hurwitz, the minimal dwell-time concept applies and using Lemma \ref{lem:minDT} we get the value 1.14053 with the matrix $$P=\begin{bmatrix}
   0.6351  &  0.0601\\
    0.0601  &  0.3649
\end{bmatrix}.$$ Considering then Theorem \ref{th:explessmin} with polynomial $Z(\tau)$ of degree 3, we use the toolbox SOSTOOLS \cite{sostools} for sum-of-squares programming to enforce the conditions of Theorem \ref{th:explessmin} and we get the value $1.14054$, which is very close to the one computed with Lemma \ref{lem:minDT}. In such a case, the following matrix is obtained $$P=\begin{bmatrix}
  0.3443 & -0.1102\\
  -0.1102&   0.2285
\end{bmatrix}.$$ Estimates of the minimal-dwell-time for different degrees for $Z$ are given in Table \ref{tab:cmin}.
\end{example}

\begin{table}[ht]
\begin{minipage}[b]{0.5\linewidth}\centering
\begin{tabular}{|c|c|}
\hline
      order of $Z$ & $T_{min}$\\       
      \hline
      \hline
      1 & 4.3255\\
      2 & 1.1407\\
      3 & 1.14054\\
      4 & 1.14054\\
      5 & 1.14054\\
      \hline
    \end{tabular}
    \caption{Estimates of the minimal dwell-time for Example \ref{ex:min}}\label{tab:cmin}
\end{minipage}
\hspace{0.5cm}
\begin{minipage}[b]{0.5\linewidth}
\centering
\begin{tabular}{|c|c|}
\hline
      order of $Z$ & $T_{min}$\\
      \hline
      \hline
      1 &	  36.3071\\   
      2 &	  4.0705\\     
      3 & 	  2.6893\\     
      4 &   2.2966\\     
      5 &	  2.2209\\     
      6 &	  2.2010\\     
      7 &   2.1979\\     
      8 &   2.1974\\     
      9 &   2.1983\\     
      10 & 2.1990\\     
      11 & infeasible\\     
      \hline
    \end{tabular}
        \caption{Estimates of the minimal dwell-time for Example \ref{ex:awful}}\label{fig:awful}
\end{minipage}
\end{table}

\begin{example}\label{ex:awful}
   Let us consider now the system (\ref{eq:mainsyst}) with matrices \cite{Cai:08}
  \begin{equation}\label{eq:awful}
  \begin{array}{lclclcl}
        A&=&\begin{bmatrix}
       -1 & 100\\
       -1 & -1
    \end{bmatrix},&& J&=&\begin{bmatrix}
      0 & -0.9\\
      0.9 & 0
    \end{bmatrix}.
  \end{array}
  \end{equation}
  In the $T$-periodic case, this system exhibits a quite complicated behavior for small $T$'s, alternating stable and unstable regions. Since $A$ is Hurwitz, we can determine the minimal dwell-time using Lemma \ref{lem:minDT} and we get the minimum dwell-time value $2.1254$. Theorem \ref{th:explessmin} leads on the other hand to the estimates of Table \ref{fig:awful}. We can see that, in this case, a gap still persists and the smallest one is obtained with a polynomial $Z(\tau)$ of degree 8. Numerical problems arise when using polynomials of degree larger than 10, resulting then in infeasible problems.
\end{example}

The above examples show some limitations of the approach when the convergence of the sequence of polynomials $Z(\tau)$ of increasing orders, towards a suitable function, is quite poor. In such a case, a high order polynomial would be needed to obtain accurate results, but this would inexorably lead to feasibility problems involving a large number of decision variables and suffering from numerical problems. It is indeed well-known that SOS-programming scales very poorly with the problem complexity, and it is a matter of future research to see how the complexity of the problem addressed in this paper can be reduced.

\subsection{Maximal dwell-time}

A general maximal dwell-time result can be straightforwardly obtained be setting $T_{min}=0$ in Theorem \ref{th:fund}. When the matrix $A$ is anti-Hurwitz, we have the following simpler result, corresponding to Lemma \ref{lem:maxDT}:
\begin{theorem}[Maximal Dwell-Time]\label{th:explessmax}
Assume that for some $\bar{T}>0$, there exist a scalar $\eps>0$, a matrix $P\in\mathbb{S}_{+}^n$ and a continuously differentiable matrix function $Z:[0,\bar{T}]\to\mathbb{S}^{3n}$ satisfying $Y_1^TZ(0)Y_1-Y_2^TZ(\bar{T})Y_2=0$ and such that the LMIs
\begin{equation}
  A^TP+PA\succ0
\end{equation}
\begin{equation}
\begin{bmatrix}
   \bar{T}(A^TP+PA) & 0 & 0\\
   0 & J^TPJ-P+\eps I_n  & 0\\
   0 & 0 & 0
 \end{bmatrix}+\He\left\{\begin{bmatrix}
   A & 0 & 0\\
   0 & 0 & 0\\
   0 & 0 & 0
 \end{bmatrix}^TZ(\tau)\right\}+\dfrac{d}{d\tau}Z(\tau)\preceq0
\end{equation}
hold for all $\tau\in[0,\bar{T}]$. Then, for any impulsive sequence in $\mathbb{I}_{(0,\bar{T}]}$, the system (\ref{eq:mainsyst}) is globally asymptotically stable and Lemma \ref{lem:maxDT} holds with the same matrix $P$.
\end{theorem}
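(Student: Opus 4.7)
The plan is to mimic the proof strategy of Theorem \ref{th:explessmin} by reducing to Lemma \ref{lem:maxDT} rather than Lemma \ref{lem:minDT}. The argument of Theorem \ref{th:fund} can be specialized because, in the maximal dwell-time setting, the matrix function $Z$ no longer carries a $\theta$-dependence and the looping condition $Y_1^TZ(0)Y_1-Y_2^TZ(\bar{T})Y_2=0$ is stated only for the single value $\bar{T}$. Hence I would fix $T_k=\bar{T}$ and show that the continuous LMI along $\tau\in[0,\bar{T}]$ forces the discrete-time Lyapunov decrease condition \eqref{eq:lolilol22} of Lemma \ref{lem:maxDT}.

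Concretely, first I would pre- and post-multiply the LMI of the theorem by the vector $\xi_k(\tau):=\col(x(t_k+\tau),x(t_k),x(t_{k+1}))$, where $x(t_k+\tau)=e^{A\tau}Jx(t_k)$ and $t_{k+1}=t_k+\bar{T}$. As in the proof of Theorem \ref{th:fund}, the block structure of the $\He\{\cdot\}$ term together with $\frac{d}{d\tau}\xi_k(\tau)=\diag(A,0,0)\,\xi_k(\tau)$ turns the inequality into
\begin{equation*}
  \xi_k(\tau)^T\Psi(\bar{T})\xi_k(\tau)+\dfrac{d}{d\tau}\bigl[\xi_k(\tau)^TZ(\tau)\xi_k(\tau)\bigr]\le 0.
\end{equation*}
Integrating over $\tau\in[0,\bar{T}]$ and using the identifications $\xi_k^+(0)=Y_1\col(x(t_k),x(t_{k+1}))$ and $\xi_k(\bar{T})=Y_2\col(x(t_k),x(t_{k+1}))$ together with the looping condition $Y_1^TZ(0)Y_1-Y_2^TZ(\bar{T})Y_2=0$, the boundary terms cancel. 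The remaining integrand collapses, exactly as in the proof of Theorem \ref{th:fund}, to
\begin{equation*}
  \bar{T}\bigl[V(x(t_{k+1}))-V(x(t_k))+\eps\|x(t_k)\|_2^2\bigr]\le 0,
\end{equation*}
with $V(x)=x^TPx$. Since this must hold for every $x(t_k)\in\mathbb{R}^n$ and since $x(t_{k+1})=e^{A\bar{T}}Jx(t_k)$, this is equivalent to the matrix inequality \eqref{eq:lolilol22} of Lemma \ref{lem:maxDT}.

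Finally, the anti-Hurwitz condition $A^TP+PA\succ 0$ in the statement of the theorem is exactly the second hypothesis \eqref{eq:lolilol11} of Lemma \ref{lem:maxDT}. Invoking that lemma then delivers global asymptotic stability for every impulse sequence in $\mathbb{I}_{(0,\bar{T}]}$, together with the fact that the same $P$ serves as a quadratic Lyapunov matrix for the discrete-time formulation. I do not anticipate any substantive obstacle: the whole argument is a direct specialization of Theorem \ref{th:fund} obtained by freezing $\theta=\bar{T}$ and dropping the $\theta$-dependence of $Z$, and the only minor care to take is to observe that the anti-Hurwitz condition is what propagates the decrease from the specific value $\bar{T}$ to all shorter inter-impulse times, via the Taylor argument already carried out in Lemma \ref{lem:maxDT}.
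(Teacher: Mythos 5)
Your proposal is correct and matches the paper's argument: the paper proves this theorem by simply invoking Theorem \ref{th:fund} (specialized to the single value $\theta=\bar{T}$, which yields condition \eqref{eq:lolilol22}) together with Lemma \ref{lem:maxDT}, whose anti-Hurwitz hypothesis \eqref{eq:lolilol11} propagates the decrease to all dwell-times in $(0,\bar{T}]$. You have merely written out the integration/looping-condition details of Theorem \ref{th:fund} explicitly, which the paper leaves implicit.
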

\begin{proof}
  The proof follows from Lemma \ref{lem:maxDT} and Theorem \ref{th:fund}.
\end{proof}

\begin{example}\label{ex:max}
  Let us consider the system (\ref{eq:mainsyst}) with matrices \cite{Briat:11l}
  \begin{equation}
  \begin{array}{lclclcl}
        A&=&\begin{bmatrix}
   1 &3\\
   -1 &2
    \end{bmatrix},& &J&=&\begin{bmatrix}
      1/2 & 0\\
      0 & 1/2
    \end{bmatrix}.
  \end{array}
  \end{equation}
  Since the matrix $A$ is anti-Hurwitz and $J$ is Schur, Theorem \ref{th:explessmax} may be applied and leads to the results of Table \ref{tab:cmax}. For comparison, Lemma \ref{lem:maxDT} yields the value $0.4620$, which is recovered by choosing $Z(\tau)$ as a polynomial of order 3.
\end{example}
\begin{table}[ht]
\begin{minipage}[b]{0.5\linewidth}\centering
\begin{tabular}{|c|c|}
    \hline
      order of $Z$ & $T_{max}$\\
      \hline
      \hline
      1 & 0.3999\\
      2 & 0.4613\\
      3 & 0.4620\\
      4 & 0.4620\\
      \hline
    \end{tabular}
    \caption{Estimates of the maximal dwell-time for Example \ref{ex:max}}\label{tab:cmax}
\end{minipage}
\hspace{0.5cm}
\begin{minipage}[b]{0.5\linewidth}
\centering
\begin{tabular}{|c|c|}
\hline
order of $Z$ & $T_{max}$\\
\hline
\hline
1 & 0.1067\\
2 & 0.1072\\
3 & 0.1072\\
\hline
\end{tabular}
\caption{Estimates of the maximal dwell-time for Example \ref{ex:robustAJ}}\label{fig:robustAJ2}
\end{minipage}
\end{table}

\section{Robust dwell-time analysis}\label{sec:robstab}

In this section, we extend the previous results to the case of uncertain systems with constant polytopic-type uncertainties, that is
\begin{equation}\label{eq:unmat2}
  A\in\mathcal{A}:=\co\left\{A_1,\ldots,A_{N_A}\right\},\quad J\in\mathcal{J}:=\co\left\{J_1,\ldots,J_{N_J}\right\}
\end{equation}
for some integers $N_A,N_J>0$. The system matrices can hence be alternatively written as $$A=\sum_{i=1}^{N_A}\sigma_{A,i}A_i,\ \mathrm{and}\ J=\sum_{i=1}^{N_J}\sigma_{J,i}J_i$$ where both $\sigma_{A}:=\col_i(\sigma_{A,i})$ and $\sigma_{J}:=\col_i(\sigma_{J,i})$ belong to the unit simplex. The exponential terms in the usual dwell-time conditions have been major obstacles to the extension of the results to uncertain systems. Thanks to the proposed alternative formulations, considering uncertain systems is henceforth possible.
The theorem below provides a result concerning the characterization of ranged dwell-time for uncertain systems:
\begin{theorem}\label{th:fund3}
 Assume there exist a matrix $P\in\mathbb{S}_{+}^n$, a scalar $\eps>0$, $N_J$ matrix functions $Z_i:[0,T_{max}]\times[T_{min},T_{max}]\to\mathbb{S}^{3n}$, $i=1,\ldots,N_J$, differentiable with respect to the first variable, verifying the condition $(Y_1^i)^TZ_i(0,\theta)Y_1^i-Y_2^TZ_i(\theta,\theta)Y_2=0$ for all $\theta\in[T_{min},T_{max}]$, $i=1,\ldots,N_J$ where
        \begin{equation*}
          Y_1^i=\begin{bmatrix}
            J_i & 0\\
            I_n & 0\\
            0 & I_n
          \end{bmatrix},\ Y_2=\begin{bmatrix}
            0 & I_n\\
            I_n & 0\\
            0 & I_n
          \end{bmatrix}
        \end{equation*}
        and such that the parameter dependent LMIs
        \begin{equation}\label{eq:expless_AJ}
\begin{bmatrix}
   \theta(A_j^TP+PA_j) & 0 & 0\\
   0 & J_i^TPJ_i-P+\eps I_n  & 0\\
   0 & 0 & 0
 \end{bmatrix}+\He\left\{\begin{bmatrix}
   A_j & 0 & 0\\
   0 & 0 & 0\\
   0 & 0 & 0
 \end{bmatrix}^TZ_i(\tau,\theta)\right\}+\dfrac{\partial}{\partial\tau}Z_i(\tau,\theta)\preceq0
\end{equation}
hold for all $\tau\in[0,T]$, $T\in[T_{min},T_{max}]$ and $i=1,\ldots,N_J$ and $j=1,\ldots,N_A$.

Then, the system (\ref{eq:mainsyst})-(\ref{eq:unmat2}) is asymptotically stable for any impulse sequence in $\mathbb{I}_{[T_{min},T_{max}]}$ and the following equivalent statements hold:
  \begin{enumerate}
    \item The quadratic form $V(x)=x^TPx$ is a discrete-time Lyapunov function for the uncertain time-varying discrete-time system $z(k+1)=e^{AT_k}Jz(k)$ for any $T_k\in[T_{min},T_{max}]$ and any $(A,J)\in\mathcal{A}\times\mathcal{J}$.
    \item The LMI
    \begin{equation}
      J^Te^{A^T\theta}Pe^{A\theta}J-P\prec0
    \end{equation}
    holds for any $\theta\in[T_{min},T_{max}]$ and any $(A,J)\in\mathcal{A}\times\mathcal{J}$.
  \end{enumerate}
\end{theorem}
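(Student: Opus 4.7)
The plan is to reduce Theorem \ref{th:fund3} to the nominal Theorem \ref{th:fund} applied at each vertex $J_i$ of the polytope $\mathcal{J}$, and then to bridge from the vertices to an arbitrary $J\in\mathcal{J}$ via a matrix Jensen inequality. The key structural observation is that the LMI (\ref{eq:expless_AJ}) is affine in $A_j$ but quadratic in $J_i$ (through the block $J_i^TPJ_i$), which is precisely why the looped-functional data $(Z_i,Y_1^i)$ is indexed only by $J$-vertices: the $A$-uncertainty can be dispatched by a direct convex combination at the LMI level, whereas the $J$-uncertainty must be handled in a later stage.

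First, fix $A=\sum_{j=1}^{N_A}\sigma_{A,j}A_j\in\mathcal{A}$ and an index $i\in\{1,\ldots,N_J\}$. The $(1,1)$-block $\theta(A_j^TP+PA_j)$ and the $\He$-term in (\ref{eq:expless_AJ}) are linear in $A_j$, while the remaining entries are independent of $j$. Taking the convex combination of (\ref{eq:expless_AJ}) in $j$ with weights $\sigma_{A,j}$ therefore produces, verbatim, the LMI (\ref{eq:expless}) of Theorem \ref{th:fund} for the pair of matrices $(A,J_i)$ with matrix function $Z_i$. The standing boundary equality $(Y_1^i)^TZ_i(0,\theta)Y_1^i-Y_2^TZ_i(\theta,\theta)Y_2=0$ is exactly (\ref{eq:loliloolldr}) for $(J_i,Z_i)$. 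Invoking Theorem \ref{th:fund} at each $J$-vertex yields $J_i^Te^{A^T\theta}Pe^{A\theta}J_i-P\prec0$ for every $\theta\in[T_{min},T_{max}]$, every $A\in\mathcal{A}$, and every $i\in\{1,\ldots,N_J\}$.

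Second, set $M_\theta:=e^{A^T\theta}Pe^{A\theta}\succeq0$ and take $J=\sum_i\sigma_{J,i}J_i\in\mathcal{J}$. Positive semidefiniteness of $M_\theta$ together with the algebraic identity
\begin{equation*}
\sum_{i=1}^{N_J}\sigma_{J,i}J_i^TM_\theta J_i-J^TM_\theta J=\tfrac{1}{2}\sum_{i,l=1}^{N_J}\sigma_{J,i}\sigma_{J,l}(J_i-J_l)^TM_\theta(J_i-J_l)
\end{equation*}
delivers the matrix Jensen inequality $J^TM_\theta J\preceq\sum_i\sigma_{J,i}J_i^TM_\theta J_i$. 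Combining this with the strict vertex inequalities obtained in the previous step and using $\sum_i\sigma_{J,i}=1$ yields $J^Te^{A^T\theta}Pe^{A\theta}J-P\prec0$ for every $\theta\in[T_{min},T_{max}]$ and every $(A,J)\in\mathcal{A}\times\mathcal{J}$. Lemma \ref{lem:rangedDT}, applied uniformly over the uncertainty set, then produces global asymptotic stability along every impulse sequence in $\mathbb{I}_{[T_{min},T_{max}]}$; the equivalent statements (1) and (2) reduce to the same inequality and are therefore obtained simultaneously.

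The main obstacle the strategy side-steps is that a naive averaging $Z(\tau,\theta):=\sum_i\sigma_{J,i}Z_i(\tau,\theta)$ would \emph{not} satisfy (\ref{eq:loliloolldr}) at the averaged $J$, because convex combination does not commute with the quadratic dependence on $J$ present both in the $(2,2)$-block and in $Y_1^TZ(0,\theta)Y_1$. Moving the $J$-convexification out of the looped-functional/LMI layer and into the discrete-time Lyapunov step, where the positive semidefiniteness of $M_\theta$ makes Jensen's bound automatic, is what unlocks the robust extension.
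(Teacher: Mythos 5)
Your proof is correct, and its first half coincides with the paper's: you take the convex combination of (\ref{eq:expless_AJ}) over the $A$-vertices (the LMI being affine in $A_j$, with the $(2,2)$-block and $Z_i$ independent of $j$) and then use the integration/looping argument of Theorem \ref{th:fund} --- the paper redoes the integration in one line, you invoke the theorem at each pair $(A,J_i)$ --- to obtain the vertex inequalities $J_i^Te^{A^T\theta}Pe^{A\theta}J_i-P\prec0$ for all $A\in\mathcal{A}$ and $\theta\in[T_{min},T_{max}]$. Where you genuinely differ is the passage from the $J$-vertices to an arbitrary $J\in\mathcal{J}$: the paper linearizes the quadratic dependence on $J$ by a Schur complement, i.e. rewrites the vertex inequality as
\begin{equation*}
\begin{bmatrix}-P & J_i^Te^{A^T\theta}P\\ Pe^{A\theta}J_i & -P\end{bmatrix}\prec0,
\end{equation*}
which is affine in $J_i$, averages over $i$, and Schur-complements back, whereas you use the Jensen-type bound $J^TM_\theta J\preceq\sum_i\sigma_{J,i}J_i^TM_\theta J_i$ with $M_\theta=e^{A^T\theta}Pe^{A\theta}\succeq0$, justified by the exact quadratic identity you display (which is correct, and the strictness survives since the weights sum to one). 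The two devices are interchangeable here: the Schur-complement route yields an equivalent affine LMI reformulation (reusable, e.g., for synthesis, and the reason the paper can phrase the step as an equivalence), while your identity is more elementary and self-contained and makes the preservation of strict negativity transparent without any congruence argument. Your closing remark explaining why one cannot simply average the $Z_i$'s --- the looping condition (\ref{eq:loliloolldr}) is quadratic in $J$ through $Y_1^i$ --- is a correct reading of why the hypothesis is indexed by $J$-vertices only, even though neither proof needs it formally.
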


\begin{proof}
  Multiplying (\ref{eq:expless_AJ}) by $\sigma_{A,j}$, summing over $j=1,\ldots,N_A$ and integrating over $[0,T]$ leads to the equivalent LMI $J_i^Te^{A^TT}Pe^{AT}J_i-P\prec0$. Since the LMI holds for all $i=1,\ldots, N_J$, then using a Schur complement, it is possible to show that this is equivalent to $J^Te^{A^TT}Pe^{AT}J-P\prec0$, for all $(A,J)\in\mathcal{A}\times\mathcal{J}$. The proof is complete.

\end{proof}

Let us illustrate the above result with the following example:
\begin{example}\label{ex:robustAJ}
  Let us consider the polytopes
  \begin{equation}
    \mathcal{A}:=\co\left\{\begin{bmatrix}
      1& 3\\
      -1 &2
    \end{bmatrix}, \begin{bmatrix}
      3 &1\\-2& 4
    \end{bmatrix}\right\},\ \mathcal{J}:=\co\left\{\begin{bmatrix}
      1/4 &0\\
      0 &2/3
    \end{bmatrix}, \begin{bmatrix}
      2/3 &0\\0 &1/2
    \end{bmatrix}\right\}.
  \end{equation}
This uncertain system satisfies the maximal dwell-time conditions since the matrices in $\mathcal{A}$ are anti-Hurwitz. We apply Theorem \ref{th:fund3} with $Z_i(\tau)$ and adding the conditions $A_j^TP+PA_j\succ0$, $j=1,\ldots,N_A$. The computed dwell-time estimates for different degrees for the polynomial $Z_i$ are gathered in Table \ref{fig:robustAJ2}. For comparison, the conditions of Lemma \ref{lem:maxDT} are gridded and their solving yields the value $0.1072$, which shows that the proposed approach gives the exact value for a polynomial of degree 2. Note, however, that solving the set of gridded LMIs is much longer than solving the proposed result. Using gridded LMIs is also less accurate since the LMIs are not checked over the whole set $\mathcal{A}$.
\end{example}

\section{General presentation of Sum of Squares as Algorithmic Tool}\label{sec:sos}

The methodology we use to implement the conditions of Theorems
\ref{th:fund}, \ref{th:explessmin}, \ref{th:explessmax} and \ref{th:fund3} is based on the sum-of-squares (SOS) decomposition of positive polynomials. When applying this methodology,
we assume that all matrix functions are polynomial, can be
approximated by polynomials, or there is a change of coordinates that
renders them polynomial.

Denote by $\mathbb R[y]$ the ring of polynomials in $y = (y_1, \dots ,
y_n)$ with real coefficients. Denote by $\Sigma_s$ the cone of
polynomials that admits a SOS decomposition, i.e., those $p
\in\mathbb R[y]$ for which there exist $h_i \in \mathbb R[y], i =
1,\dots,M$ so that $$p(y) = \sum ^M_{ i=1} h^2_i (y).$$

If $p(y) \in\Sigma_s$, then clearly $p(y) \geq 0$ for all $y$. The
converse is however not true, except in some very specific cases. The advantage of SOS is that
the problem of testing whether a polynomial is SOS is equivalent to solving a semidefinite program \cite{Parrilo:00}, hence worst-case polynomial-time
verifiable. Note that on the other hand, checking whether a polynomial is positive is NP-hard. When extended to matrix polynomials, positivity is substituted by \emph{semidefinite positivity}. The semidefinite programs related to SOS can be formulated efficiently
and the solution can be retrieved using
SOSTOOLS \cite{sostools}, which interfaces with semidefinite
solvers such as SeDuMi \cite{Sturm:99}.

Consider now the conditions in Theorem \ref{th:explessmin} which take the form
\begin{equation}\label{MIneq}
L(\tau) \leq 0,\quad  \tau\in\mathcal{S}:=[0,\bar{T}],
\end{equation}
where $L(\tau)\in \mathbb{S}^{3n}$ and $\mathcal S$ is a semialgebraic set
described by polynomial inequalities:
\[ \mathcal S = \{s\in\mathbb{R}:\ g_i(s) \geq 0,\ i =
1,\dots,M\},
\]
where the $g_i$'s are polynomial functions. In order to test condition \eqref{MIneq}, we can
apply Positivstellensatz results, such as the one in \cite{Putinar:93}, which allow to test positivity on a semialgebraic set using SOS programming. Specifically, Condition~\eqref{MIneq} holds if there exist SOS polynomials $P_i(\tau, y)$, such that
\[
L(\tau) + \sum^M_{ i=1} g_i(\tau)P_i(\tau, y) = P_0(\tau).
\]
Intuitively, the above condition guarantees that when $\tau\in\mathcal
S$, we have $L(\tau)\leq -\sum_{i=1}^M g_i(\tau)p_i(\tau, y) \leq 0$ since
$g_i \geq 0$ and $p_i \geq 0$, and therefore $L(\tau) \leq 0$ for those
$\tau$.

\section{Conclusion}

Alternative sufficient conditions for the dwell-time characterization of linear impulsive systems have been developed. They can be interpreted as continuous-time decrease conditions of a certain class of looped-functionals. Ranged, minimal and maximal dwell-times have been considered. The obtained conditions are expressed as robust semidefinite programming problems which may be solved using polynomial techniques, such as sum-of-squares. Thanks to the convenient structure of the conditions, the results have been extended to uncertain systems. Several examples illustrate the efficiency of the approach. Future works will be devoted to the derivation of necessary and sufficient alternative conditions, stabilization criteria and the extension to nonlinear systems.

\section{Acknowledgments}

This work has been supported by the ACCESS (http://www.access.kth.se) and RICSNET projects, KTH, Stockholm.

\bibliographystyle{ieeetran}


\end{document}